\theoremstyle{plain}
\newtheorem{theom}{Theorem}[]
\newtheorem{lemma}[theom]{Lemma}
\newtheorem{cor}[theom]{Corollary}
\newtheorem{prop}[theom]{Proposition}
\theoremstyle{definition}
\newtheorem*{conj}{Conjecture}
\def\thm@space@setup{
  \thm@preskip=2pt
  \thm@postskip=2pt
}
\renewenvironment{proof}[1][\proofname] {\par\pushQED{\qed}\normalfont\topsep6\p@\@plus6\p@\relax\trivlist\item[\hskip\labelsep\bfseries#1\@addpunct{.}]\ignorespaces}{\popQED\endtrivlist\@endpefalse}
\definecolor{auburn}{rgb}{0.43, 0.21, 0.1}
\definecolor{coolblack}{rgb}{0.0, 0.18, 0.39}
\definecolor{darkcerulean}{rgb}{0.03, 0.27, 0.49}
\newcommand{\sref}[2]{\hyperref[#2]{#1 \ref{#2}}}
\newcommand{\zref}[2]{\hyperref[#2]{#1}}
\let\@fnsymbol\@arabic
\titlespacing{\section}{0mm}{8mm}{4mm}
\apptocmd\normalsize{%
 \abovedisplayskip=9pt
 \abovedisplayshortskip=9pt
 \belowdisplayskip=9pt
 \belowdisplayshortskip=9pt
}{}{}
\DeclareSymbolFont{largesymbolsCM}{OMX}{cmex}{m}{n}
\let\sum\relax
\DeclareMathSymbol{\sum}{\mathop}{largesymbolsCM}{"50}
\title{On the R\'enyi--Ulam Game with Restricted Size Queries}
\author[a]{\'Ad\'am X.\ Frakn\'oi}
\author[b,*]{D\'avid \'A.\ M\'arton}
\author[,c]{D\'aniel G.\ Simon\thanks{The author was supported by the ERC Advanced Grant "GeoScape".}}
\author[,c]{\\D\'aniel A. Lenger\thanks{The author was partially supported by the KKP 139502 grant.}}
\affil[a]{Institute of Mathematics\\E\"otv\"os Lor\'and University\\P\'azm\'any P\'eter s\'et\'any 1/C,\protect\\H-1117 Budapest\\Hungary}
\affil[b]{Bolyai Institute\\University of Szeged\\Aradi v\'ertan\'uk tere 1,\protect\\H-6720 Szeged\\Hungary}
\affil[c]{Alfr\'ed R\'enyi Institute of Mathematics\\Re\'altanoda utca 13--15,\protect\\H-1053 Budapest\\Hungary}
\affil[ ]{\protect\\[-10pt]*corresponding author: {\tt\footnotesize martondavidadam[at]gmail.com}}
\affil[ ]{\protect\\[-8pt]{\textbf{e-mails:}}\protect\\{\tt\footnotesize fraknoiadam[at]gmail.com\\martondavidadam[at]gmail.com\\ dgs45[at]cantab.ac.uk\protect\\leda1648[at]gmail.com}}
\date{}
\def\nk{\left\lfloor\frac{n}{k}\right\rfloor}
\def\nkpplus{\left\lfloor\frac{(p+1)n}{k}\right\rfloor}
\def\nkrplus{\left\lfloor\frac{(r+1)n}{k}\right\rfloor}
\def\nkqplus{\left\lfloor\frac{(q+1)n}{k}\right\rfloor}
\begin{document}

\maketitle

\vspace*{-7mm}
\begin{abstract}
\vspace*{-2mm}
\linespread{1}\selectfont
\noindent We investigate the following version of the well-known R\'enyi--Ulam game. Two players --- the Questioner and the Responder --- play against each other. The Responder thinks of a number from the set $\{1,\ldots,n\}$, and the Questioner has to find this number. To do this, he can ask whether a chosen set of at most $k$ elements contains the thought number. The Responder answers with YES or NO immediately, but during the game, he may lie at most $\ell$ times. The minimum number of queries needed for the Questioner to surely find the unknown element is denoted by $RU_\ell^k(n)$. First, we develop a highly effective tool that we call Convexity Lemma. By using this lemma, we give a general lower bound of $RU_\ell^k(n)$ and an upper bound which differs from the lower one by at most $2\ell+1$. We also give its exact value when $n$ is sufficiently large compared to $k$. With these, we managed to improve and generalize the results obtained by Meng, Lin, and Yang in a 2013 paper about the case $\ell=1$.
\\[6pt]{\itshape\bfseries Keywords:} combinatorial searching, adaptive searching, R\'enyi--Ulam liar game, small sets\\
{\itshape\bfseries MSC:} 91A46
\end{abstract}

\section{Introduction}
We investigate a modified version of the following combinatorial searching problem: two players --- the Questioner and the Responder --- play against each other. The Responder thinks of a number from the set $\{1,\ldots,n\}$ and the Questioner has to find this number. To accomplish that, he can ask if the sought number is an element of a chosen subset of $\{1,\ldots,n\}$, after which the Responder answers him with a YES or NO. The Questioner aims to find the thought number in as few questions as possible.

The game can be adaptive or non-adaptive. In the non-adaptive version, the Questioner must ask all his questions in advance, and the Responder answers them in the end. Whereas in the adaptive game the Questioner can ask one question at a time, which the Responder has to answer immediately.

Regarding the adaptive version, it is a well-known result that the Questioner only needs at most $\lceil\log_2 n\rceil$ rounds to find the number and that there is a case where he must ask that many questions.

R\'enyi and Ulam came up individually with the version where the Responder may lie during the game at most $\ell$ times. Let us denote the minimal number of queries needed for the Questioner in the worst case by $RU_\ell(n)$. We have already mentioned that $RU_0(n)=\lceil\log_2 n\rceil$. Beside that, the exact value of $RU_\ell(n)$ is known for all $n$ when $\ell=1,2,3$. Furthermore, for all $\ell$ there is a known lower and upper bound that differ by at most $\ell$, see \sref{Theorem}{weightbound}. For more details see \cite{du,pelc}.

There are many different versions of the above-described Basic R\'enyi--Ulam Game. We consider here the one where the Questioner can ask about subsets with at most $k$ elements, where $k<\left\lfloor\tfrac{n}{2}\right\rfloor$. Note that if $k\ge \lfloor\frac{n}{2}\rfloor$, the Questioner can use the same strategies as in the basic game, because he can ask about any of the subsets by picking either the set itself or its complement.

For this game, we denote the minimal number of questions needed in the worst case, from the Questioner's viewpoint, by $RU_\ell^k(n)$. Its exact value is known when $\ell=0$, see Katona \cite{katona1}. There it is stated that $RU_0^k(n)=\left\lfloor\tfrac{n}{k}\right\rfloor-1+\lceil\log_2(k+m_1)\rceil$ with $m_1$ denoting the remainder of $n$ divided by $k$. Katona also gave bounds for the non-adaptive version, see \cite{katona2}.

We started investigating this problem upon the suggestion of Katona, and now, using the existing bounds for the Basic R\'enyi--Ulam Game, we give a generic lower bound of $RU_\ell^k(n)$ and an upper bound that differs from the lower one by at most $2\ell+1$. Furthermore, we give its exact value for all $\ell$ when $n\gg k$.

Meng, Lin, and Yang had already published an article in 2013 concerning the case $\ell=1$, see \cite{meng}. They gave a lower bound for $RU_1^k(n)$ that differs from its value by at most $1$, and determined its exact value when $k^2\le n$. We generalize their results here for arbitrary $\ell$ and find the exact value for smaller values of $n$, too. They also made a conjecture regarding the Questioner's optimal strategy. In the last section, we disprove their conjecture with a counterexample.

Last, we would like to describe our problem as an optimal testing one. Imagine the following scenario. There is only one person who is infected with a virus, and we want to find that person. To do this, we use the so-called pooling strategy, where at most $k$ samples can be examined simultaneously. Our tests are not perfect, only relatively accurate: we assume that they make a mistake at most $\ell$ times during the process. Note that finding the optimal testing strategy here is equivalent to solving the problem of the above-described Bounded R\'enyi--Ulam Game. We also mention that the R\'enyi--Ulam Game and its many different versions have a strong connection with error correcting codes, see for example \cite{rivest}.

\section{Notation and Main Results}

Let $RU_\ell(n)$ denote the minimal number of queries needed for the Questioner to find the unknown element, assuming that he can ask an arbitrary subset of $\{1,\ldots,n\}$ and that the Responder may lie at most $\ell\ge0$ times. Let $RU_\ell^k(n)$ denote the minimum number of queries when the Questioner can only ask subsets with at most $k\ge1$ elements.

In this paper, $m_i$ denotes the remainder of $n\cdot i$ divided by $k$ for every $i\ge1$, and $\log x$ means $\log_2 x$ for every $x\in\mathbb{R}^+$.

The state of the game can be presented --- as it was introduced by Berlekamp \cite{berlekamp} --- with the vector $(x_0, x_1, \ldots, x_\ell)$, where after each answer $x_i$ denotes the number of the elements that received NO exactly $i$ times for all $0\le i\le\ell$. We say that these elements are in the $i$-th component of the game. Let $RU_\ell^k(x_0, x_1, \dots, x_\ell)$ denote the minimum number of queries needed for the Questioner from the starting state $(x_0, x_1, \ldots, x_\ell)$. Note that if a set not containing a given element receives a YES, then it counts as a NO for that element.

We denote by $x_{\ell+1}$ the number of the elements that received NO at least $\ell+1$ times. These elements can be excluded from the remaining search process because the sought number cannot be one of them. When describing a state, for clarity, we will sometimes use the notation $(x_0,\ldots,x_\ell \mid x_{\ell+1})$. Note that the game ends exactly when every element except one got a NO (at least) $\ell+1$ times, that is when $\sum_{i=0}^\ell x_i = 1$.

At the state $(x_0,\ldots,x_\ell)$, a query can also be presented by a vector $(q_0, q_1, \ldots, q_\ell)$, where $0\le q_i \le x_i$ is the number of the elements from the $i$-th component which are being queried. If the answer is YES to the question then the new state of the game is $(q_0,q_1+(x_0-q_0),\ldots,q_\ell+(x_{\ell-1}-q_{\ell-1}))$, and if it is NO then the new state is $(x_0-q_0,(x_1-q_1)+q_0,\ldots, (x_{\ell}-q_{\ell})+q_{\ell-1})$.

Given two states $(x_0,\ldots,x_\ell)$ and $(y_0,\ldots,y_\ell)$, we call the state $(x_0,\ldots,x_\ell)$ not worse --- and the state $(y_0,\ldots,y_\ell)$ not better --- if $RU_\ell^k(x_0,\ldots,x_\ell)\le RU_\ell^k(y_0,\ldots,y_\ell)$.


The following well-known lemma is a commonly used one that we will use throughout the paper. 

\begin{lemma}\label{trivieasier}
Let $\ell\ge 0,k\ge 1$. If the states $(x_0, \ldots, x_\ell)$ and $(y_0, \ldots, y_\ell)$ are such that $\sum_{j=1}^m x_j\ge \sum_{j=1}^m y_j$ for $m=1,\ldots,\ell$, then $RU^k_{\ell}(x_0,\ldots,x_\ell)\ge RU^k_{\ell}(y_0,\ldots,y_\ell)$.
\end{lemma}

Weight functions are a classical tool for the R\'enyi--Ulam game. Given a state $(x_0, \ldots, x_\ell)$, assuming that the Questioner needs $q$ queries to solve it in the worst case, its weight is $w_q(x_0,\ldots,x_\ell)\coloneqq\binom{q}{\le \ell}x_0+\binom{q}{\le \ell-1}x_1+\cdots+\binom{q}{\le 0}x_\ell$, where $\binom{q}{\le i}\coloneqq\sum_{j=0}^i \binom{q}{j}$.

It can be shown that after each question the sum of the weights of the two possible arising states is equal to the weight of the state preceding the question. Therefore the Responder can always achieve that after each query the weight decreases by no more than its half. When the Questioner finds the unknown element, the weight is $1$, since he needs zero queries then. From these follows the lower bound of \sref{Theorem}{weightbound}. The upper bound is due to Rivest et al. \cite{rivest}, for proofs see \cite{du,rivest}.

\begin{theom}\label{weightbound}
Let $\ell \ge 0$, $n\ge 1$, and define the \emph{weight bound} as
\[W_\ell(n)\coloneqq\min\left\{ q:n\le \frac{2^q}{\binom{q}{\le\ell}}\right\}.\]
Then
$W_\ell(n)\le RU_\ell(n)\le W_\ell(n)+\ell.$
\end{theom}

Note that the above-defined weight bound is the same as the well-known Hamming bound in the field of coding theory.

Now we present the most important tool we use throughout the paper, which we call the Convexity Lemma.

\begin{lemma}\label{convex2}(Convexity Lemma)
Let $\ell\ge1$, $1\le i\le j \le \ell$, such that $x_i,x_j\ge1$, and let $1\le a \le \min \{i,\ell-j+1\}$. Then, except for the case where $\sum_{i=0}^{\ell} x_i=2$ and $a=\ell-j+1$,
\begin{multline*}
RU_\ell^k(x_0, \dots, x_i, \dots, x_j, \dots , x_\ell \mid x_{\ell+1}) \\\le RU_\ell^k(x_0, \dots, x_{i-a}+1, \dots, x_i-1, \dots, x_j-1, \dots, x_{j+a}+1, \dots, x_\ell\mid x_{\ell+1}).
\end{multline*}
\end{lemma}

The above lemma allows us to prove our main results, which are presented in the following theorem --- the main theorem of the article.

\begin{theom}\label{maintheorem}
Let $\ell\ge0$, $k,n\ge1$, such that $1<k<\left\lfloor\frac{n}{2}\right\rfloor$, and let
\[L_\ell^k(n)\coloneqq\max_{0\le p\le \ell}\left(\nkpplus-1+RU_{\ell-p}(k)\right).\]
Then it is true that
\begin{enumerate}[itemsep=-2pt,topsep=-2pt]
\item[(i)] {\center $L_\ell^k(n)\le RU_{\ell}^k(n)\le L_\ell^k(n)+\ell+1.$}
\item[(ii)] Furthermore if $\ell\ge1$, $n\ge\max\{k(\ell+5), k(\log\log 2k+6), k(\log\log 2k+2\log \ell+5)\}$ then
\[RU_\ell^k(n)=\left\lfloor\frac{(\ell+1)n}{k}\right\rfloor-1+\left\lceil\log (k+m_{\ell+1})\right\rceil.\]
\end{enumerate}
\end{theom}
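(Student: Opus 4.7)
My approach is to establish both parts by exhibiting matching Responder (lower-bound) and Questioner (upper-bound) strategies, with \sref{Lemma}{convex2} used throughout to reduce arbitrary states to ones concentrated in a single component. For the lower bound of (i), I fix $p\in\{0,\ldots,\ell\}$ and construct a Responder strategy forcing at least $\nkpplus-1+RU_{\ell-p}(k)$ queries. The natural potential is $\Phi_p(x_0,\ldots,x_\ell)\coloneqq\sum_{i=0}^{p}(p+1-i)x_i$, which starts at $(p+1)n$; a direct computation shows that any query of size at most $k$ decreases $\Phi_p$ by $\sum_{i=0}^p b_i$ under a \textsc{yes} answer and by $\sum_{i=0}^p a_i$ under a \textsc{no} answer, where $a_i$ and $b_i$ count the elements of component $i$ inside and outside the query. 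The Responder takes the smaller of the two, so the per-query decrease is at most $\min(\sum a_i,\sum b_i)\le k$. Since $\Phi_p\le p+1$ is necessary for the game to terminate, at least $\nkpplus-1$ queries are needed for this ``Phase~1'' portion. Moreover, using \sref{Lemma}{convex2} the Responder forces the end-of-Phase-1 state to be at least as hard as $(0,\ldots,0,k,0,\ldots,0)$ with $k$ in slot $p$, which is equivalent to an unrestricted $(\ell-p)$-lie game on $k$ elements and hence forces $RU_{\ell-p}(k)$ further queries. Taking the maximum over $p$ yields $L_\ell^k(n)$.

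For the upper bound of (i), I design a Questioner strategy by induction on $\ell$ and on the state vector. At each step the Questioner asks a carefully chosen $k$-subset so that each response either eliminates $k$ elements (Katona-style progress in $n$) or increments the lie count on them (Berlekamp-style progress in $\ell$); \sref{Lemma}{convex2} lets us reduce to concentrated states at each recursive step. The resulting algorithm uses at most $L_\ell^k(n)+\ell+1$ queries, the slack of $\ell+1$ coming from rounding losses accumulated across the $\ell+1$ lie levels.

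For (ii), the three thresholds on $n$ are engineered so that the $p=\ell$ term $\left\lfloor(\ell+1)n/k\right\rfloor-1+\lceil\log k\rceil$ strictly dominates every other term of $L_\ell^k(n)$ by more than $\ell+1$, using the classical upper bound $RU_m(k)\le\lceil\log k\rceil+m\lceil\log\log 2k\rceil+O(m)$ for the unrestricted $m$-lie game; the three conditions cover the small-$\ell$, intermediate, and $\ell$-dependent regimes respectively. Once the maximizer is pinned at $p=\ell$, I refine both bounds by the modular remainder $m_{\ell+1}$: on the algorithmic side I adapt Katona's scheme so that after $\left\lfloor(\ell+1)n/k\right\rfloor-1$ queries exactly $k+m_{\ell+1}$ elements remain in component $\ell$, after which the residual $\ell=0$ game on $k+m_{\ell+1}$ elements is solved in $\lceil\log(k+m_{\ell+1})\rceil$ queries; on the adversarial side I refine the Phase~1 accounting by precise residue control so that the Responder produces exactly the state $(0,\ldots,0,k+m_{\ell+1},0)$, forcing the same $\lceil\log(k+m_{\ell+1})\rceil$ via Katona's $\ell=0$ lower bound.

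The main obstacle will be the modular counting in (ii): producing exactly $k+m_{\ell+1}$ (rather than $k$) elements in component $\ell$ at the end of Phase~1 requires sharp residue tracking and repeated use of the Convexity Lemma to rule out pathological spread states that would undercut the Katona-type bound. A secondary difficulty is verifying that each of the three thresholds on $n$ indeed makes the $p=\ell$ term dominant with margin greater than $\ell+1$; this needs a separate asymptotic analysis of $RU_{\ell-p}(k)$ across the various values of $p$, with the three conditions on $n$ together covering all regimes.
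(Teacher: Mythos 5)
Your lower bound for (i) is sound in outline: the potential $\Phi_p$ is essentially a repackaging of the paper's device of letting the Responder answer NO to the first $\left\lfloor\frac{(p+1)n}{k}\right\rfloor-1$ queries and invoking \sref{Lemma}{nyakatekertlemma} to identify the greedy query as the Questioner's best reply. Note, however, that the additive split into ``Phase~1'' plus an endgame worth $RU_{\ell-p}(k)$ does not follow from the potential bound alone (a state with $\Phi_p\ge k$ need not be as hard as $k$ elements concentrated in component $p$); you really do need the optimality of the greedy query under all-NO answers to pin the easiest reachable end-of-phase state down to $(0,\ldots,0,k+m_{p+1},n-k-m_{p+1},0,\ldots,0)$. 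The upper bound of (i) is only asserted: a concrete strategy (query $k$ fresh elements $\lfloor n/k\rfloor$ times, then re-query the same block of $k$ until either the block is isolated or the two groups merge, and induct on $\ell$ in the latter case) and the accompanying arithmetic, including the verification that $p\lfloor n/k\rfloor+L_{\ell-p}^k(n)\le L_\ell^k(n)$, are the actual content here. Also, the slack $\ell+1$ is not a rounding loss; it is the cost incurred when the adversary answers YES on the repeated block queries.

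The genuine gap is in (ii). Even if the three conditions on $n$ force the maximum in $L_\ell^k(n)$ to be attained at $p=\ell$, part (i) then only yields $RU_\ell^k(n)\le\left\lfloor\frac{(\ell+1)n}{k}\right\rfloor-1+\lceil\log k\rceil+\ell+1$, which exceeds the claimed exact value: the $\ell+1$ slack cannot be removed by ``pinning the maximizer''. Your refinement of the algorithmic side treats only the branch in which the Responder answers NO to every query --- which is indeed the branch where exactly $k+m_{\ell+1}$ elements survive into component $\ell$, and is the easy part. The whole difficulty, and the actual origin of all three thresholds on $n$, is bounding the query count on the branches where a YES arrives early, leaving a state such as $(0,\ldots,0,k,0,n-k,0,\ldots,0)$; the paper spends most of the proof of (ii) on exactly this, using the quantitative bounds on $RU_m(\cdot)$ from \sref{Lemma}{ru-estimate} and a monotonicity analysis in the parameters $s$, $p$, $q$, with each of the three conditions on $n$ appearing as the hypothesis of one of the resulting inequalities. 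You have identified the modular bookkeeping as the main obstacle, but it is the early-YES analysis that is missing, and without it the proof of (ii) does not close.
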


We note that $RU_\ell^1(n)=(\ell+1)n-1$ for arbitrary $\ell \ge 0$, $n \ge 1$, and --- as already mentioned --- $RU_0^k(n)=\left\lfloor\tfrac{n}{k}\right\rfloor-1+\lceil\log(k+m_1)\rceil$ for all $1\le n$, $1\le k \le n$.

We also note that the exact value of $RU_\ell(k)$ is unknown for $\ell\ge4$. For an always calculable bound we can apply \sref{Theorem}{weightbound}, which yields the following.

\begin{cor}\label{maintheorem-corollary}
Let $\ell \ge 0$, $k, n\ge 1$, such that $1<k<\left\lfloor\frac{n}{2}\right\rfloor$, and
\[\tilde{L}_\ell^k(n):=\max_{0\le p\le \ell}\left(\nkpplus-1+W_{\ell-p}(k)\right).\]
Then $\tilde{L}_\ell^k(n)\le RU_\ell^k(n)\le \tilde{L}_\ell^k(n)+2\ell+1$.
\end{cor}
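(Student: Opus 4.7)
The corollary is essentially a direct consequence of chaining Theorem \ref{maintheorem}(i) with Lemma \ref{weightbound}, so I expect no serious obstacles --- the proof should be only a few lines. The core observation is that $\hat L_\ell^k(n)$ and $L_\ell^k(n)$ are defined identically except that each $RU_{\ell-p}(k)$ is replaced by $W_{\ell-p}(k)$, and Lemma \ref{weightbound} bounds the gap between these two quantities by $\ell-p\le \ell$.

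For the lower bound, I would argue termwise: for every $0\le p\le\ell$, Lemma \ref{weightbound} gives $W_{\ell-p}(k)\le RU_{\ell-p}(k)$, hence
\[\nkpplus-1+W_{\ell-p}(k)\le\nkpplus-1+RU_{\ell-p}(k).\]
Taking maxima over $p$ yields $\hat L_\ell^k(n)\le L_\ell^k(n)$, and combining with the lower bound half of Theorem \ref{maintheorem}(i) gives $\hat L_\ell^k(n)\le RU_\ell^k(n)$.

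For the upper bound, I would use the other half of Lemma \ref{weightbound}: $RU_{\ell-p}(k)\le W_{\ell-p}(k)+(\ell-p)\le W_{\ell-p}(k)+\ell$ for every $0\le p\le\ell$. Adding the common term $\nkpplus-1$ and taking the maximum gives $L_\ell^k(n)\le\hat L_\ell^k(n)+\ell$. Then the upper bound half of Theorem \ref{maintheorem}(i) yields
\[RU_\ell^k(n)\le L_\ell^k(n)+\ell+1\le\hat L_\ell^k(n)+2\ell+1,\]
which is the claim. The only point worth a brief verification is that taking the maximum preserves the termwise inequality, which is immediate since both sides are maxima over the same index set $\{0,1,\dots,\ell\}$; no additional structural argument is needed.
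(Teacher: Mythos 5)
Your proof is correct and is exactly the argument the paper intends (the paper only remarks that the corollary follows by ``applying Lemma~\ref{weightbound}'' to the terms of $L_\ell^k(n)$ and invoking Theorem~\ref{maintheorem}(i), without writing out the details). The termwise comparison $W_{\ell-p}(k)\le RU_{\ell-p}(k)\le W_{\ell-p}(k)+\ell-p\le W_{\ell-p}(k)+\ell$ followed by taking maxima is precisely the right chain, and nothing further is needed.
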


To conclude this section, we present bounds for $W_\ell(n)$. Similar results can be found in \cite{rivest} --- without determining the constants. The proof is a straightforward calculation that can be found in \sref{Appendix}{appendix}.

\begin{lemma}\label{ru-estimate}
For every $\ell\ge1, n\ge2$ we have that
\[\log n + \ell\log\log 2n-\ell\log \ell\le W_{\ell}(n)\le \log n +\ell\log\log 2n+2\ell\log \ell+2,\]
and therefore, as a consequence of \sref{Theorem}{weightbound},
\[\log n + \ell\log\log 2n-\ell\log \ell\le RU_{\ell}(n)\le \log n +\ell\log\log 2n+2\ell\log \ell+\ell+2.\]
\end{lemma}

\section{The Convexity Lemma}

In this section, we prove the \zref{Convexity Lemma}{convex2} and present some of its immediate consequences. Before the proof we will need the following auxiliary lemma.

\begin{lemma}\label{endofgame}
Consider either the Basic or the Bounded R\'enyi--Ulam Game for some $\ell,k\ge1$ and for any starting state $(x_0,\ldots,x_\ell)$ with $\sum_{m=0}^\ell x_m\ge2$. Let the Questioner play a certain strategy that uses $q$ queries in its worst case. Then if, along this strategy, the game ends with the state $(x_0, \ldots,x_{i-1} , x_i, x_{i+1} \ldots, x_\ell)=(0,\ldots, 0, 1, 0 \ldots, 0)$, we know that the Questioner used at most $q-(\ell-i)$ queries, where $0\le i\le \ell$.
\end{lemma}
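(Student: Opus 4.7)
The plan is to examine the sibling $v'$ of the final leaf $v$ in the strategy tree and show, via a weight-bound computation, that $v'$ by itself requires at least $\ell-i$ queries. Write $v$ for the leaf at depth $q'$ carrying the state $\mathbf{e}_i:=(0,\ldots,0,1,0,\ldots,0)$ with the lone $1$ at position $i$. If $i=\ell$ the claim $q'\le q$ is immediate, so assume $i<\ell$. Let $p$ be the parent of $v$ with state $(y_0,\ldots,y_\ell)$ satisfying $\sum_j y_j\ge 2$ and query vector $(q_0,\ldots,q_\ell)$, and denote by $v'$ the sibling of $v$. Once we know $RU_\ell^k(v')\ge \ell-i$, the subtree rooted at $v'$ has depth at least $\ell-i$, so the overall worst-case depth $q$ satisfies $q\ge q'+(\ell-i)$, yielding $q'\le q-(\ell-i)$.

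Whether $v$ is the YES- or the NO-child of $p$, the update rules for the game state given at the end of Section~2 together with $0\le q_j\le y_j$ force, after a short direct calculation, the following structure on $(y_j)$: $y_j=0$ for every $j\in\{0,\ldots,\ell-1\}\setminus\{i-1,i\}$, $y_{i-1}+y_i\le 1$, and consequently $y_\ell\ge 1$. The subcase $y_{i-1}=1$ corresponds to a query that leaves the state unchanged on one of the two branches (the ``empty query'' when $v$ is the YES-child, and its analogue otherwise); such a strategy cannot terminate in finitely many queries and is therefore excluded. Hence $y_i=1$ (or $y_0=1$ when $i=0$), all remaining $y_j$ with $j<\ell$ vanish, and substituting into the update rules gives
\[
v'=(0,\ldots,0,1,0,\ldots,0,y_\ell),
\]
with the lone $1$ at position $i+1$ and $y_\ell\ge 1$ elements at position $\ell$.

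By the weight-conservation argument recalled just before \sref{Lemma}{weightbound}, any state solvable in $r$ queries must have weight at most $2^r$. Computing the weight of $v'$ at $r=\ell-i-1$ remaining queries we find
\[
w_{\ell-i-1}(v')=\binom{\ell-i-1}{\le\ell-i-1}\cdot 1+\binom{\ell-i-1}{\le 0}\cdot y_\ell=2^{\ell-i-1}+y_\ell>2^{\ell-i-1},
\]
so $v'$ cannot be solved in fewer than $\ell-i$ queries, completing the argument. The genuinely delicate step is the case analysis identifying the parent state: the algebra is clean, but one has to be careful to rule out the degenerate ``empty-query'' subcases that are incompatible with a terminating strategy; everything else reduces to a single weight-bound estimate.
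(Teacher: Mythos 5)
Your architecture is genuinely different from the paper's: the paper examines the penultimate state and runs an explicit adversary argument showing the Responder can extract $\ell-i+1$ further queries from it, whereas you isolate the sibling $v'$ of the terminal leaf and apply the Berlekamp weight bound. That route is viable, and your weight computation for a state of the form $(0,\ldots,0,1,0,\ldots,0,y_\ell)$ is correct. However, there is a genuine error in your case analysis, namely the step where you discard the subcase $y_{i-1}=1$. When $v$ is the NO-child of its parent, the subcase $y_{i-1}=1$, $y_i=0$ corresponds to the query consisting of the lone element sitting in component $i-1$ together with all $y_\ell$ elements of the $\ell$-th component: the NO answer moves that element to component $i$ and eliminates the whole $\ell$-th component, producing exactly $\mathbf{e}_i$, while the YES answer produces $v'=(0,\ldots,0,1,0,\ldots,0,y_\ell)$ with the $1$ in position $i-1$. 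This is a perfectly legitimate, progress-making query on both branches --- not an ``analogue of the empty query'' --- so it cannot be excluded. Moreover, even for the genuine empty query (the YES-child version of this subcase), the justification ``cannot terminate in finitely many queries'' does not hold: the hypothesis is only that the strategy uses $q$ queries in its worst case, and such a strategy may contain wasteful queries; an empty query merely duplicates the parent state on one branch of a still-finite tree.

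Fortunately the gap is local. In each subcase you tried to exclude, $v'$ is either the parent itself or the state $(0,\ldots,0,1,0,\ldots,0,y_\ell)$ with the lone $1$ in position $i-1$ and $y_\ell\ge1$, and the same weight computation, now with coefficient $\binom{r}{\le \ell-i+1}$ on the lone element, gives
\[
w_{\ell-i}(v')=2^{\ell-i}+y_\ell>2^{\ell-i},
\]
so $v'$ in fact requires at least $\ell-i+1\ge\ell-i$ further queries and your conclusion survives. Replacing the faulty exclusion by this one-line estimate repairs the proof.
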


\begin{proof}
Let $r$ denote the number of queries used in the case described above. Then the penultimate state after $r-1$ questions was either $(0,\dots,0,1,0,\dots,a)$ or $(0,\dots,1,0,0,\dots,a)$ for some $a\ge1$.

We want to show that from here the Responder can always achieve a play where the Questioner needs to ask at least $\ell-i+1$ more. We consider the state $(0,\ldots,0,1,0,\ldots,a)$, since it is clearly not worse for the Questioner. We will show that from here the Responder can achieve, after an arbitrary query, that the next state is one of the followings: $(0,\ldots,0, 0,1,\ldots, b)$ or $(0,\ldots,0, 1,0,\ldots, a)$, where $0<b\le a$.

If the Questioner asks all the elements then the Responder can achieve the second case with a YES. If the Questioner asks $(0,\ldots,0, 1,0,\ldots, a-b)$ or $(0,\ldots,0, 0,0,\ldots, b)$ then the Responder can achieve the first case with a NO or a YES, respectively.

Using the same argument $\ell-i$ times, we conclude that the Responder can achieve a not better state than $(0,\ldots, c)$, where $c\ge 2$. From here the Questioner needs at least one more question.

Now it follows that $(r-1)+(\ell-i+1)\le q$, meaning the Questioner used at most $q-(\ell-i)$ queries, indeed.
\end{proof}

\begin{proof}[Proof of the \zref{Convexity Lemma}{convex2}]
Let us call Game 1 and Game 2 the two games starting from the states $(x_0, \dots, x_i, \dots x_j, \dots , x_\ell \mid x_{\ell+1})$ and $(x_0, \dots, x_{i-a}+1, \dots, x_i-1, \dots, x_j-1, \dots, x_{j+a}+1, \dots, x_\ell \mid x_{\ell+1})$, respectively.

Let us think about these games in the following way. There are two elements of $\{1,\ldots,n\}$, say $A$ and $B$, that are in the $i$-th and $j$-th component of the starting state of Game 1, respectively. In the starting state of Game 2, $A$ is transferred to the $(i-a)$-th, and $B$ is transferred to the $(j+a)$-th component.

We will show that every questioning strategy for Game 2 that uses $q$ queries in its worst case can be converted into a questioning strategy for Game 1 with the same property.

Let us play the two games in parallel, such that in Game 1 we are the Questioner and we have an opponent, called Opponent 1, who is the Responder, while in Game 2 we are the Responder, and our other opponent, called Opponent 2, is the Questioner.

Assume that Opponent 2 plays a questioning strategy that uses $q$ queries in its worst case. What we want is to convert this into a questioning strategy for Game 1. We do this using the following method.

First, we wait for the question of Opponent 2. We can assume that Opponent 2 does not ask any element that is in the $(\ell+1)$-th component of Game 2, otherwise we just ignore it. Then, accordingly, we ask in Game 1 as defined next.

We ask exactly the same elements as Opponent 2, except for $A$ and $B$.  If Opponent 2 asks both $A$ and $B$, then we do too, and if he only asks one of them, we ask the one being in the component with the smaller index --- if both are in the same component, let us ask $B$. Otherwise, we ask none of them. Note that this method is well-defined and that we use as many queries in Game 1 as Opponent 2 uses in Game 2.

Second, we wait for the answer of Opponent 1 to the constructed query, and then answer the same in Game 2.

Let $s_1(A)$, $s_1(B)$ denote the indices of the components containing $A$ and $B$ in Game 1, and $s_2(A)$, $s_2(B)$ in Game 2. Note that these indices can be even $\ell+1$, but no more due to our assumption. At the beginning of the game we have that $s_1(A)=i$, $s_1(B)=j$, $s_2(A)=i-a$, $s_2(B)=j+a$. It follows from the definition of our method that the equality
\begin{equation}\label{eq:index1}
s_1(A)+s_1(B)=s_2(A)+s_2(B)
\end{equation}
and the inequalities
\begin{equation}\label{eq:index2}
\min\{s_2(A),s_2(B)\}\le s_1(A)\le s_1(B) \le\max\{s_2(A),s_2(B)\}
\end{equation}
always hold. We note that for any other element, the index of its component is equal in Game 1 and Game 2.

From \eqref{eq:index1} and \eqref{eq:index2}, if at least one of $A$ or $B$ is not in the $(\ell+1)$-th component of Game 2 then at least one of them is not in the $(\ell+1)$-th component of Game 1. Furthermore, if neither $A$ nor $B$ fall out from the search --- meaning they got less than $\ell+1$ NO --- up to a point in Game 2, then none of them did in Game 1.

Opponent 2 wins when $\sum_{m=0}^\ell x_m=1$ in Game 2, which he surely achieves after at most $q$ queries. When he wins, one of the following cases occurs.

\textit{Case 1:} Both $A$ and $B$ are in the $(\ell+1)$-th component of Game 2. According to \eqref{eq:index1}, then they are in the $(\ell+1)$-th component of Game 1, too. Thus $\sum_{m=0}^\ell x_m=1$ in Game 1, meaning we have won there.

\textit{Case 2:} One of $A$ or $B$ is in the $(\ell+1)$-th, while the other one is in the $\ell$-th component of Game 2. Using \eqref{eq:index1}, this is clearly true for Game 1, meaning $\sum_{m=0}^\ell x_m=1$ there, too.

\textit{Case 3:} One of $A$ or $B$ is in the $(\ell+1)$-th, while the other one is in the $t$-th component of Game 2, where $t\le\ell-1$. According to \sref{Lemma}{endofgame}, this can happen after at most $q-(\ell-t)$ rounds. Note that we can actually use \sref{Lemma}{endofgame} for Game 2, since it is true that $\sum_{m=0}^\ell x_m\ge2$ at the starting state of Game 2 --- that is why we had to exclude the case when $a=\ell-j+1$ and $\sum_{m=0}^{\ell} x_m=2$ for the starting state of Game 1.

Now we have that $1\le \sum_{m=0}^\ell x_m\le2$, meaning we have not necessarily won in Game 1. However, from \eqref{eq:index1} and \eqref{eq:index2}, we also know that either $s_1(A)=t$ or $s_1(A)>t$, where the former one yields that $\sum_{m=0}^\ell x_m=1$. When the latter occurs, we can surely win in Game 1 after querying $A$ $\ell-t$ times. Therefore, we only need at most $q$ queries in this case, too. 

Assuming that Opponent 2 plays an optimal questioning strategy --- which uses exactly \[RU_\ell(x_0, \dots, x_{i-a}+1, \dots, x_i-1, \dots, x_j-1, \dots, x_{j+a}+1, \dots, x_\ell \mid x_{\ell+1})\] queries --- in Game 2, we get the statement of the lemma.
\end{proof}

We call a query \emph{optimal} for the Questioner at the state $(x_0,\ldots,x_\ell)$ if he can find the sought number from the possible resulting states using at most $RU_\ell^k(x_0,\ldots,x_\ell)-1$ queries. The following consequence of the \zref{Convexity Lemma}{convex2} presents --- under a certain condition --- an optimal query that will be very useful in the next section.

\begin{lemma}\label{nyakatekertlemma}
Consider the Bounded R\'enyi--Ulam Game for some $\ell\ge0$, $k,n\ge1$. Suppose that the Questioner knows that the Responder will answer NO to his next question when he is at the state $(x_0,\ldots,x_\ell\mid x_{\ell+1})$. Then the query $\bm{q}=(q_0,\ldots,q_\ell)$ with $q_i\coloneqq\min\left\{x_i, k-\sum_{j=0}^{i-1} q_j\right\}$ for all $0\le i\le \ell$ is optimal.
\end{lemma}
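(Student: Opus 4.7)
My plan is to prove optimality of the described query via an exchange argument, with each elementary exchange justified by the Convexity Lemma (\sref{Lemma}{convex2}).

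Set $r\coloneqq RU_\ell^k(x_0,\ldots,x_\ell)$ and take any optimal query $(q_0^*,\ldots,q_\ell^*)$, so by definition both of its branches have $RU_\ell^k$ at most $r-1$. First I would reduce to the case $\sum_i q_i^*=\min\{k,\sum_i x_i\}$: a strictly smaller coordinate-sum can always be increased by augmenting some $q_i^*$, which in either branch amounts to shifting a single element upwards by one index --- an operation that does not raise $RU_\ell^k$, by the commonly used statement at the start of Section~3. Observe next that the described query lexicographically maximises $(q_0,\ldots,q_\ell)$ subject to $q_i\le x_i$ and $\sum_i q_i\le k$, so if $(q_i^*)$ does not already equal $(q_i)$ there exists a smallest $i$ with $q_i^*<q_i$ together with some $j>i$ for which $q_j^*>q_j$.

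The main step is the elementary exchange $q_i^*\mapsto q_i^*+1$, $q_j^*\mapsto q_j^*-1$. A direct computation from the transition rule shows that the state after NO is altered by $-1$ at position $i$, $+1$ at $i+1$, $+1$ at $j$, and $-1$ at $j+1$ (with the last $-1$ corresponding to an additional eliminated element when $j=\ell$). Choosing $I=i+1$, $J=j$ and $a=1$, this is exactly the reverse of the spreading operation in \sref{Lemma}{convex2}, hence the new NO-state has $RU_\ell^k$ no larger than the old one. Iterating a finite number of times transforms $(q_i^*)$ into $(q_i)$, so the NO-state of the described query satisfies $RU_\ell^k\le r-1$. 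For the YES-branch, I would argue dually, starting from an optimal query whose YES-branch $RU_\ell^k$ is as small as possible and applying \sref{Lemma}{convex2} to the symmetric displacement $(+1,-1,-1,+1)$ of the YES-state; this yields the analogous bound $RU_\ell^k\le r-1$ on the YES side.

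The main obstacle is the exception in \sref{Lemma}{convex2}, which arises precisely when the state to which the Convexity Lemma is applied has sum $2$ and $j=\ell$ (so $a=\ell-j+1$). Such a configuration forces the alternative query's NO-state to have only a single surviving candidate --- its $RU_\ell^k$ is already zero --- and the chain of comparisons breaks down at that last exchange. I would handle this corner case directly, invoking \sref{Lemma}{endofgame} to control the end-of-game transitions and checking by hand that both branches of the described query still satisfy the $r-1$ bound. Combining the exchange argument with this case analysis establishes the optimality of the described query.
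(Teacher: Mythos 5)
Your treatment of the NO branch is essentially the paper's proof: first pad the query up to size $\min\{k,\sum_i x_i\}$ (legitimate precisely because the answer is known to be NO), then convert an arbitrary query into the greedy one by elementary exchanges $q_i^*\mapsto q_i^*+1$, $q_j^*\mapsto q_j^*-1$ with $i<j$, each justified by applying \sref{Lemma}{convex2} with $I=i+1$, $J=j$, $a=1$ to the two NO-states. The paper does exactly this, fixing the coordinates from left to right.

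The YES-branch paragraph, however, is both unnecessary and incorrect. Unnecessary, because under the hypothesis that the Responder will answer NO, optimality is judged on the NO-state alone; this is also exactly how the lemma is invoked later (in \sref{Lemma}{onemorelie}, \sref{Lemma}{bestquestion}, and in the lower-bound part of \sref{Theorem}{maintheorem}), where one only needs that no query produces an easier NO-state than the greedy one. Incorrect, because the same exchange displaces the YES-state by $(+1,-1,-1,+1)$ at positions $i$, $i+1$, $j$, $j+1$; that is, the \emph{new} YES-state is the spread-out state of \sref{Lemma}{convex2}, so the lemma yields the \emph{reverse} inequality there: every exchange that (weakly) improves the NO-branch (weakly) worsens the YES-branch. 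This trade-off is the whole content of the Convexity Lemma, and no dual argument exists --- the greedy query need not be optimal in the unconditional, both-branches sense. The same objection applies to your claim that padding the query helps ``in either branch'': it helps only after a NO. Finally, you are right to flag the exceptional case of \sref{Lemma}{convex2} (new NO-state of total size $2$ with $j=\ell$), but note that there the inequality between the two NO-states genuinely fails --- the competing query's NO-state is already a singleton while the greedy one's is not --- so no by-hand check restores it; the paper's own proof silently passes over this degenerate configuration, which does not arise in the states to which the lemma is actually applied.
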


\begin{proof}
    Take an optimal question $\bm{r}=(r_0,\ldots,r_\ell)$ for the conditions described above such that the $\ell_1$-distance $\Vert\bm{q}-\bm{r}\Vert_1$ is as small as possible. Now indirectly suppose that $\Vert\bm{q}-\bm{r}\Vert_1 \neq 0$.
    
    By the definition of $\bm{q}$, the smallest index $i$ for which $q_i \neq r_i$ must have $q_i>r_i$. If $q_i \ge r_i$ for all $0\le i\le \ell$, then because of \sref{Lemma}{trivieasier} $\bm{q}$ is not a worse question than $\bm{r}$, so it is optimal, contradicting our indirect assumption.
    
    Therefore, there has to exist an index $i< j\le\ell$ with the property $r_j>q_j$. We will show that the query $\hat{\bm{r}} \coloneqq (r_0,\ldots,r_{i}+1,\ldots,r_{j}-1,\ldots,r_\ell)$ is optimal for the investigated case, too.
    
    The question $\bm{r}$ leads to the state
    $\bm{y}\coloneqq (x_0-r_0,x_1-r_1+r_0,\ldots,x_\ell-r_\ell+r_{\ell-1} \mid x_{\ell+1} + r_\ell)$,
    while $\hat{\bm{r}}$ leads to the state $\hat{\bm{y}}\coloneqq\bm{y} - \bm{e}_i + \bm{e}_{i+1} + \bm{e}_{j} - \bm{e}_{j+1}$, where $\bm{e}_m$ denotes the $m$-th standard basis vector for all $0\le m\le \ell+1$. Due to the \zref{Convexity Lemma}{convex2}, $\hat{\bm{y}}$ is not worse than $\bm{y}$, meaning that $\hat{\bm{r}}$ is also optimal, indeed. However, $\Vert\bm{q}-\hat{\bm{r}}\Vert_1 < \Vert\bm{q}-\bm{r}\Vert_1$, which is a contradiction.
\end{proof}

We conclude this section with the following fairly intuitive lemma. Although we will not use it later, its proof demonstrates the utility of the \zref{Convexity Lemma}{convex2} and its consequence.

\begin{lemma}\label{onemorelie}
For all $\ell\ge0$, $k,n\ge1$ it is true that $RU^k_{\ell+1}(n)\ge RU^k_{\ell}(n)+\nk$.
In other words, if one more lie is available for the Responder, then at least $\nk$ more questions are required for the Questioner.
\end{lemma}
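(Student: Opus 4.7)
The plan is to exhibit a Responder strategy that forces at least $\nk + RU^k_\ell(n)$ queries. First, I would let the Responder answer NO to each of the first $\nk$ queries and then play adversarially from the resulting state. This is a valid strategy: since every query has size at most $k$, there are at most $\nk\cdot k\le n$ query--element incidences in total, so by pigeonhole at least one element has been in at most one query, and the all-NO transcript remains consistent with any lie budget $\ell+1\ge 1$.

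Next, I would iterate \sref{Lemma}{nyakatekertlemma}: knowing that the next answer is NO, the Questioner's optimal query (the one minimising $RU^k_{\ell+1}$ of the resulting state) is the fill-lowest one defined there. Starting from $(n,0,\dots,0)$ and applying this greedy choice at each step while collecting NOs leads, after $\nk$ rounds, to the state $(m,n-m,0,\dots,0)$ with $m=n-k\nk$. To handle Questioner strategies that deviate from fill-lowest -- in particular those that query the same element repeatedly in order to push it into component $\ell+2$ -- I would combine \sref{Lemma}{nyakatekertlemma} with \sref{Lemma}{convex2}: the Convexity Lemma compares the reachable post-phase states in $RU^k_{\ell+1}$ and shows that any saving in Phase~2 obtained by pushing an element out in Phase~1 is offset by the extra work those redundant queries demand, so the total number of queries is minimised by the fill-lowest play.

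Finally, I would apply the elementary shift observation from the start of Section~3: moving the $m$ elements from component $0$ to component $1$ can only make the game easier, and therefore $RU^k_{\ell+1}(m,n-m,0,\dots,0)\ge RU^k_{\ell+1}(0,n,0,\dots,0)$. The state $(0,n,0,\dots,0)$ in the $(\ell+1)$-lie game is isomorphic to the state $(n,0,\dots,0)$ in the $\ell$-lie game -- the empty component $0$ means the extra lie budget is never activated -- so its $RU^k_{\ell+1}$ value coincides with $RU^k_\ell(n)$. Summing the two phases, the Responder forces at least $\nk+RU^k_\ell(n)$ queries, proving the lemma. The main obstacle I foresee is precisely the push-out deviations in the second paragraph: a clean argument probably requires an auxiliary induction on $n$ combined with a careful quantitative use of \sref{Lemma}{convex2}, showing that every NO query spent lifting an element toward component $\ell+2$ contributes at least as much to the eventual $RU^k_{\ell+1}$ total as a fill-lowest query would have.
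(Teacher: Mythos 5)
Your proof is correct and follows essentially the same route as the paper: force NO answers for the first $\nk$ queries, invoke \sref{Lemma}{nyakatekertlemma} to conclude that the resulting state is at least as hard as $(m_1,n-m_1,0,\dots,0)$, shift the remaining zeroth-component elements down to reach $(0,n,0,\dots,0)$, and identify that state with the starting state of the $\ell$-lie game. The ``push-out'' deviations you worry about require no auxiliary induction: \sref{Lemma}{nyakatekertlemma} already compares the post-NO state of an \emph{arbitrary} query (including one that expels elements into component $\ell+2$) with that of the fill-lowest query, and since the zeroth component keeps at least $k$ elements throughout this phase, iterating it directly gives the claim.
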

\begin{proof}
Consider the game with $\ell+1$ lie, and assume that the Responder answers NO for the first $\nk$ questions. Then we know from \sref{Lemma}{nyakatekertlemma} that the Questioner arrives at a state that is not better than $(m_1, n-m_1, 0, \dots, 0)$, since --- for him --- the optimal question will always be $(k,0,\ldots,0)$. This is a not better state than $(0,n,0,\dots, 0)$, which requires $RU_\ell^k(n)$ questions to solve.
\end{proof}

\section{Proof of the Bounds}
In this section, we show that $L_\ell^k(n)$ is a lower bound of $RU_\ell^k(n)$, and then we prove that they differ by at most $\ell+1$.
\begin{proof}[Proof of \sref{Theorem}{maintheorem} \textit{(i)}]
First, let us prove that \[L_\ell^k(n)\coloneqq\max_{0\le p\le \ell}\left(\nkpplus-1+RU_{\ell-p}(k)\right)\] is a lower bound. Imagine ourselves in the Questioner's place, and assume that we know that the Responder will answer NO for the first $\nkpplus-1$ questions for some $0\le p \le \ell$. Then, from \sref{Lemma}{nyakatekertlemma}, we know that we will get to a state that is not better than $(0,\dots,k+m_{p+1},n-k-m_{p+1},\dots,0)$. Even from this state, we need at least
\begin{align}\label{eq:biz1}
\begin{split}
RU_\ell^k(0,\dots,k+m_{p+1},n-k-m_{p+1},\dots,0)&\ge RU_\ell^k(0,\dots,k+m_{p+1},0,\dots,0)\\
&\ge RU_{\ell-p}^k(k)=RU_{\ell-p}(k)
\end{split}
\end{align}
queries. It follows that $\nkpplus-1+RU_{\ell-p}(k)$ is a lower bound for every $0\le p\le \ell$, indeed.

Second, we prove the upper bound by induction on $\ell$ with fixed values of $k$ and $n$. To be precise, we will prove that the above-mentioned lower bound differs from the exact value by at most $\ell+1$.

For the base case $\ell=0$ we need $RU^k_0(n)-L_0^k(n)=\lceil\log{(k+m_1)}\rceil-\lceil\log{k}\rceil\le 1$, which clearly holds true. Now, for the induction step, we must prove that if $RU_{\ell-p}^k(n)-L_{\ell-p}^k(n)\le \ell-p+1$ for all $1\le p\le \ell$ then $RU_{\ell}^k(n)-L_\ell^k(n)\le \ell+1$. 

With $\nk$ queries, using \sref{Lemma}{trivieasier}, we can always reach a state that is not worse than $(k, n-k, 0, \dots, 0)$ if we always ask $k$ elements from the zeroth component. From this state, consider the set of those $k$ elements that are currently at the zeroth component and always ask this set until one of the following cases occur.

\textit{Case 1:} The $n-k$ elements from the first component fall out from the searching, while the $k$ elements from the zeroth arrive at the $p$-th component, where $0\le p\le\ell-1$. This results the state $(0\dots,0, k,0, \dots,0)$ after altogether $\nk+\ell+p$ questions. From here, we still need $RU_{\ell-p}(k)$ queries.

Now we define $a_{\ell,p}\coloneqq(p+1)\nk-1+RU_{\ell-p}(k)$ and $b_{\ell,p}\coloneqq\nk+\ell+p+RU_{\ell-p}(k)$, where $0\le p\le \ell-1$. It is clear that $a_{\ell,p}\le L_\ell^k(n)$ for all $p$. Furthermore, $\max_{0\le p \le\ell-1} b_{\ell,p}$ is an upper bound for the questions needed altogether in Case 1, because here we can find the unknown element with that many queries. Let us now compare $L_\ell^k(n)$ and $b_{\ell,p}$. Then we see that, since $\nk\ge2$,
\[b_{\ell,p}-L_\ell^k(n)\le b_{\ell,p}-a_{\ell,p}=p\cdot\left(1-\nk\right)+\ell+1\le \ell+1.\]
Therefore $b_{\ell,p}\le L_\ell^k(n)+\ell+1$ for every $p$, thus $L_\ell^k(n)+\ell+1$ is an upper bound in this case.

\textit{Case 2:} The $k$ elements of the zeroth component merge with the $n-k$ elements of the first one at the $p$-th component, where $1\le p\le \ell$. This results the state $(0\dots,0, n,0, \dots,0)$ after altogether $\nk+2p-1$ questions. From here, we still need $RU^k_{\ell-p}(n)$ queries.

Now, similarly as above, we define $c_{\ell,p}\coloneqq p\cdot\nk+L_{\ell-p}^k(n)$ and $d_{\ell,p}\coloneqq\nk+2p-1+RU_{\ell-p}^k(n)$ for all $1\le p\le \ell$. It is clear that $\max_{1\le p\le \ell}d_{\ell,p}$ is an upper bound for the questions needed altogether in Case 2. We will also show later that for all $1\le p\le \ell$
\begin{equation}\label{eq:c}
c_{\ell,p}\le L_\ell^k(n).
\end{equation}
Using \eqref{eq:c} and the induction hypothesis we have that, since $\nk\ge2$,
\[d_{\ell,p}-L_\ell^k(n)\le d_{\ell,p}-c_{\ell,p}\le (1-p)\cdot\left(\nk-1\right)+\ell+1\le \ell+1,\]
meaning $L_\ell^k(n)+\ell+1$ questions are enough in this case, too.

The proof is now almost complete, it only remains to verify \eqref{eq:c}. That is, we want to show that if $\ell\ge1$ then $p\cdot\nk+L_{\ell-p}^k(n)\le L_\ell^k(n)$ holds for all $1\le p\le \ell$. After substitution, we want to have that
\[p\cdot\nk+\max_{0\le r\le \ell-p}\left(\nkrplus-1+RU_{(\ell-p)-r}(k)\right) \le \max_{0\le r\le \ell}\left(\nkrplus-1+RU_{\ell-r}(k)\right).\]
Comparing the appropriate members of the maximums --- for all $0\le r\le \ell-p$ compare the $r$-th term from the left-hand side with $(r+p)$-th one from the right-hand side ---, we get that
\[p\cdot \nk+\nkrplus-1+RU_{(\ell-p)-r}(k)\le\left\lfloor\frac{(r+p+1)n}{k}\right\rfloor-1+RU_{\ell-(r+p)}(k)\]
for all $0\le r\le \ell-p$, which yields that \eqref{eq:c} is indeed true, concluding our proof.
\end{proof}

\section{Proof of the Exact Value}\label{sectionexact}
In this section, we sketch a strategy for the Questioner, which --- assuming that $n$ is sufficiently large with respect to $k$ --- uses as many questions as our lower bound, hence proves the second part of \sref{Theorem}{maintheorem}.

\begin{proof}[Proof of \sref{Theorem}{maintheorem} \textit{(ii)}]
As seen before in \eqref{eq:biz1}, $\max_{0\le p\le \ell}\left(\left\lfloor\tfrac{(p+1)n}{k}\right\rfloor-1+RU_{\ell-p}(k+m_{p+1})\right)$ is a lower bound of $RU_\ell^k(n)$. Therefore $\left\lfloor\tfrac{(\ell+1)n}{k}\right\rfloor-1+RU_0(k+m_{\ell+1})=\left\lfloor\tfrac{(\ell+1)n}{k}\right\rfloor-1+\left\lceil\log (k+m_{\ell+1})\right\rceil$ is a lower bound, too.

Imagine ourselves in the Questioner's place, and let us play the following questioning strategy. We ask $k$ elements from the components with the smallest possible indices until we receive a YES. That is, we ask $k$ elements from the first non-zero component if that is possible, otherwise, we ask all of the elements there and the remaining ones from the next component.

It is clear that if we always receive a NO until the $\left(\left\lfloor\tfrac{(\ell+1)n}{k}\right\rfloor-1\right)$-th question, the state of the game is $(0,\ldots,0,k+m_{\ell+1})$. Here we switch to an optimal strategy for the case $\ell=0$. Thus we will use at most $RU_0^k(k+m_{\ell+1})=RU_0(k+m_{\ell+1})=\left\lceil\log (k+m_{\ell+1})\right\rceil$ questions. Therefore $\left\lfloor\tfrac{(\ell+1)n}{k}\right\rfloor-1+\left\lceil\log (k+m_{\ell+1})\right\rceil$ queries are indeed enough in this case. 

Moreover, if we get a YES earlier, but when the first non-zero component is already the $\ell$-th one, we arrive at the state $(0,\ldots,0,k)$ --- since the $k$ elements being asked remain in the $\ell$-th component, while the other elements fall out from the search. This state is solvable with at most $\lceil\log k\rceil$ questions, meaning we need less than $\left\lfloor\frac{(\ell+1)n}{k}\right\rfloor-1+\left\lceil\log (k+m_{\ell+1})\right\rceil$ questions.

There are two ways of getting a YES even earlier --- up to the $\left\lceil\tfrac{\ell\cdot n}{k}\right\rceil$-th question. Let $0\le s\le\ell-1$ denote the index of the first non-zero component before the question receives the first YES.

\textit{Case A:} There are at least $k$ elements in the $s$-th component. Then $k$ elements remain in the $s$-th component, and all the elements of the $(s+1)$-th go to the $(s+2)$-th. All the other elements go to the $(s+1)$-th component. So we arrive at the state \[(x_0,\ldots,x_{s-1},x_s,x_{s+1},x_{s+2},x_{s+3},\ldots,x_\ell)=(0,\ldots,0,k,x,n-k-x,0,\ldots,0),\] where $m_{s+1}\le x\le n-k$ and $x\equiv m_{s+1}$ modulo $k$. From here, we ask about the $x$ elements in the $(s+1)$-th component, querying only $k$ at a time when possible and $m_{s+1}$ for the last time. It is actually enough to consider only this strategy since for these plus questions the NO answer always results in the not better state.
Indeed, using the \zref{Convexity Lemma}{convex2}, we see that if $k\le x\le n-k$, \[(x_0,\ldots,x_{s-1},x_s,x_{s+1},x_{s+2},x_{s+3},x_{s+4},\ldots,x_\ell)=(0,\ldots,0,0,2k,x-k,n-k-x,0,\ldots,0)\] is not worse than $(0,\ldots,0,k,x-k,n-x,0,0,\ldots,0)$. Similarly, if $x=m_{s+1}$ then $(0,\ldots,0,0,k+m_{s+1},0,n-k-m_{s+1},0,\ldots,0)$ is not worse than $(0,\ldots,0,k,0,n-k,0,0,\ldots,0)$. So we can conclude that in this case --- using altogether $\left\lceil\tfrac{(s+1)n}{k}\right\rceil$ queries --- we arrive at a state that is not worse than $(0,\ldots,0,k,0,n-k,0,0,\ldots,0)$.

\textit{Case B:} There are less than $k$ --- thus $m_{s+1}$ --- elements in the $s$-th component, meaning we get the first YES for the $\left\lceil\tfrac{(s+1)n}{k}\right\rceil$-th question. Note that in this case $m_{s+1}>0$. The resulting state is $(0,\ldots,0,m_{s+1},k-m_{s+1},n-k,0,\ldots,0)$, which is not worse than $(0,\ldots,0,k,0,n-k,0,\ldots,0)$.

In summary, if we get the first YES after a query that is preceded by a state in which the first non-zero component is the $s$-th one, we need at most $\left\lceil\tfrac{(s+1)n}{k}\right\rceil+RU_\ell^k(0,\ldots,0,k,0,n-k,0\ldots,0)$ questions, where $0\le s\le \ell-1$.

Consider the state $(0,\ldots,0,k,0,n-k,0\ldots,0)$. From here we always ask the set that contains those $k$ elements that currently form the $s$-th component. After each answer either the $k$ or the $n-k$ elements move to the next component. We do this until one of the following occurs.

\textit{Case 1:} The elements from the $s$-th component merge with the elements from the $(s+2)$-th in the $p$-th component, where $s+2\le p\le \ell$. This can happen after $2p-2s-2$ queries. The resulting state is solvable with at most $RU_{\ell-p}^k(n)$ questions, which --- according to the first part of the \zref{theorem}{maintheorem} --- is no more than $\max_{0\le q\le \ell-p}\left(\nkqplus-1+RU_{\ell-p-q}(k)\right)+\ell-p+1$.

\textit{Case 2:} The elements from the $(s+2)$-th component fall out from the search, and the elements from the $s$-th arrive at the $p$-th component, where $s\le p\le \ell-1$. That can happen after $\ell+p-2s-1$ queries. The resulting state is solvable with at most $RU_{\ell-p}(k)$ questions.

In the first case, using \sref{Lemma}{ru-estimate}, we can bound the queries needed altogether in the following way:
\begin{align*}
\begin{split}
&\left\lceil\frac{(s+1)n}{k}\right\rceil+2p-2s-2+\max_{0\le q \le \ell-p}\Bigg(\left\lfloor\frac{(q+1)n}{k}\right\rfloor-1+\log k+(\ell-p-q)\log\log 2k\\
&\quad+2(\ell-p-q+1)\log(\ell-p-q+1)+(\ell-p-q)+2\Bigg)+\ell-p+1
\end{split}\\
\begin{split}
\le\text{ }&\frac{(s+1)n}{k}+2\ell-2s+1+\log k+(\ell-p)\log\log 2k\\
&\quad+\max_{0\le q\le \ell-p}\Bigg(\frac{(q+1)n}{k}-q\log\log 2k+2(\ell-p-q+1)\log(\ell-p-q+1)-q\Bigg).
\end{split}
\end{align*}
Let us think of $q$ as a real number and of the expression in the latter maximum as its continuous function on $[0,\ell-p]$. Consider the derivative of this function. It is $\tfrac{n}{k}-\log\log 2k-2\log(\ell-p-q+1)-\tfrac{2}{\ln 2}-1$, so at least $\tfrac{n}{k}-\log\log 2k-2\log \ell-4$, which is non-negative if $n\ge k(\log\log 2k+2\log \ell+4)$. So when $n$ satisfies this condition, our upper bound is maximal if $q=\ell-p$.

Substituting this, we get the upper bound \[\frac{(s+1)n}{k}+\frac{(\ell-p+1)n}{k}+\log k+\ell+p-2s+1.\] It is clear --- since $\tfrac{n}{k}\ge 1$ --- that with respect to $p$ this expression is monotonically decreasing, therefore it is maximal when $p=s+2$. Substituting this, we get $\tfrac{\ell\cdot n}{k}+\log k+\ell-s+3$, which is clearly maximal when $s=0$.

Therefore we use at most $\tfrac{\ell\cdot n}{k}+\log k+\ell+3$ queries in the first case. Compare this to $\tfrac{(\ell+1)n}{k}-2+\log k$, which is a lower bound of $\left\lfloor\tfrac{(\ell+1)n}{k}\right\rfloor-1+\left\lceil\log (k+m_{\ell+1})\right\rceil$. We can see that if $n\ge k(\ell+5)$ then $\tfrac{\ell\cdot n}{k}+\log k+\ell+3\le\tfrac{(\ell+1)n}{k}-2+\log k$. Thus in this case $\left\lfloor\tfrac{(\ell+1)n}{k}\right\rfloor-1+\left\lceil\log (k+m_{\ell+1})\right\rceil$ questions are indeed enough.

Very similarly, in the second case we can estimate the number of queries, using \sref{Lemma}{ru-estimate}, in the following way:
\begin{align*}
&\left\lceil\frac{(s+1)n}{k}\right\rceil+\ell+p-2s-1+\log k+(\ell-p)\log\log 2k+2(\ell-p)\log(\ell-p)+(\ell-p)+2\\[4pt]
\le\text{ }&\frac{(s+1)n}{k}+\log k+2\ell-2s+2+(\ell-p)\log\log 2k+2(\ell-p)\log(\ell-p).
\end{align*}
It is clear that the latter one is monotonically decreasing with respect to $p$, so we substitute $p=s$ and get
\begin{equation}\label{eq:bound}
\frac{(s+1)n}{k}+\log k+2\ell-2s+2+(\ell-s)\log\log 2k+2(\ell-s)\log(\ell-s).
\end{equation}
Now consider this as a continuous function of $s$ on $[0,\ell-1]$. Its derivative is then \[\frac{n}{k}-2-\log\log 2k-2\log(\ell-s)-\frac{2}{\ln 2}\ge\frac{n}{k}-\log \log 2k-2\log \ell-5.\] The latter one --- so the derivative, too --- is non-negative if $n\ge k(\log\log 2k+2\log \ell+5)$. Therefore when $n$ satisfies this condition, \eqref{eq:bound} is monotonically increasing with respect to $s$, so it is maximal if $s=\ell-1$. Using this, it is now enough to have that $\tfrac{\ell\cdot n}{k}+\log k+\log\log 2k+4\le\frac{(\ell+1)n}{k}-2+\log k$, which is true if $n\ge k(\log\log 2k+6)$. Thus $\left\lfloor\frac{(\ell+1)n}{k}\right\rfloor-1+\left\lceil\log (k+m_{\ell+1})\right\rceil$ questions are enough in the second case, too.

In summary, we saw that if $n\ge\max\{k(\ell+5), k(\log\log 2k+6), k(\log\log 2k+2\log \ell+5)\}$, the Questioner can find the sought number using at most $\left\lfloor\frac{(\ell+1)n}{k}\right\rfloor-1+\left\lceil\log (k+m_{\ell+1})\right\rceil$ queries, which is also a lower bound of $RU_\ell^k(n)$, concluding our proof.
\end{proof}

\section{Additional remarks}

In this section, we present some additional results. We will discuss a bound that is more accurate than that of \sref{Theorem}{maintheorem}, but before that, we prove an easy consequence of \sref{Lemma}{trivieasier}.

\begin{lemma}\label{egykerdeskulonbseg}
Let $\ell\ge0$, $n\ge2$ and $x,y\le n$, such that $0<x<y\le2x$. Then $$RU_\ell(y, n-y, 0, \dots, 0)-RU_\ell(x, n-x, 0, \dots, 0)\le 1$$
\end{lemma}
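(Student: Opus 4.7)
The plan is to give the Questioner an explicit strategy at the state $(y, n-y, 0, \ldots, 0)$ that uses exactly one query and leaves a state no harder than $(x, n-x, 0, \ldots, 0)$ in either possible outcome. Specifically, I would have the Questioner ask a subset consisting of $y-x$ elements drawn entirely from the $0$-th component — i.e., the query vector $(y-x, 0, \ldots, 0)$, which is well-defined since $y > x$ gives $y-x \ge 1$. Substituting into the formulas for the two resulting states from Section 2, the NO branch yields the state $(x, n-x, 0, \ldots, 0)$ exactly, so by definition that branch requires at most $RU_\ell(x, n-x, 0, \ldots, 0)$ further queries.

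The heart of the argument is the YES branch, whose resulting state is
\[A := (y-x,\, x,\, n-y,\, 0,\, \ldots,\, 0),\]
which differs from $C := (x, n-x, 0, \ldots, 0)$ by having $n - y$ extra elements in component $2$. To establish $RU_\ell(A) \le RU_\ell(C)$, I would rely on the standard monotonicity principle recalled at the start of Section 3: shifting an element from a lower to a higher component can only decrease $RU_\ell$, so equivalently shifting from higher to lower can only increase it. I would transform $A$ into $C$ by a sequence of such ``downward'' shifts, splitting on $n \ge 2x$ versus $n < 2x$. In the first case, move $2x-y$ elements from component $2$ to component $0$ and the remaining $n-2x$ elements from component $2$ to component $1$; in the second case, move all $n-y$ elements from component $2$ to component $0$ and then $2x-n$ elements from component $1$ to component $0$. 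A short arithmetic check, using $y \le 2x$, $y \le n$, and $x < y$, shows that in each case the component counts of $C$ are attained and that enough elements are available at every step (in particular $2x - n \le x$ follows from $x < y \le n$).

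Combining the two branches gives the desired bound $RU_\ell(y, n-y, 0, \ldots, 0) \le 1 + RU_\ell(x, n-x, 0, \ldots, 0)$. The principal obstacle is precisely the asymmetry in the YES branch — the $n-y$ elements pushed into component $2$ have no analogue in $C$ — so a direct pointwise monotonicity comparison is not available. The two-case shift construction above is what is needed to exhibit $A$ as a ``softening'' of $C$ via moves that the monotonicity principle governs, and once this is in place the lemma follows immediately.
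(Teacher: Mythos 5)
Your proof is correct and follows essentially the same route as the paper's: ask $(y-x,0,\dots,0)$, observe that NO lands exactly on $(x,n-x,0,\dots,0)$, and argue that the YES state $(y-x,x,n-y,0,\dots,0)$ is no harder. The paper dismisses the YES branch in one clause (``which is even better for the Questioner, since $y\le 2x$''); your two-case shifting argument is a valid and welcome expansion of exactly that step.
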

\begin{proof}
At the state $(y, n-y, 0, \dots, 0)$ let the Questioner ask $(y-x, 0, \dots, 0)$. Then the NO answer results exactly the state $(x, n-x, 0, \dots, 0)$, whereas the YES answer results the state $(y-x,x,n-y,\ldots,0)$, which is not worse for the Questioner by \sref{Lemma}{trivieasier}, since $y\le 2x$.
\end{proof}

\begin{prop}
With the conditions of \sref{Theorem}{maintheorem}, let \[\hat{L}_\ell^k(n)\coloneqq\max_{0\le p\le \ell}\left(\nkpplus-1+RU_{\ell-p}(k+m_{p+1}, n-k-m_{p+1},0,\dots, 0)\right).\] Then
$\hat{L}_\ell^k(n)\le RU_\ell^k(n)\le \hat{L}_\ell^k(n)+\ell$.
\end{prop}

Note that if we substitute $\ell=1$, we get a lower bound with a difference from the real value at most $1$, just as Meng, Lin, and Yang \cite{meng}. However, $RU_{\ell-p}(k+m_{p+1},n-k-m_{p+1},0,\dots,0)$ is unknown in the general case, just as $RU_{\ell-p}(k)$ --- furthermore, there is no known useful bound of it.

The proof goes by division into cases, just as the proof of \sref{Theorem}{maintheorem} described in detail --- the only plus tool needed is \sref{Lemma}{egykerdeskulonbseg}. Thus, we omit it from our paper.

Now we examine the problem of finding the optimal strategy for the Questioner. As we have seen in \sref{Section}{sectionexact}, we can give an optimal strategy when the number of the elements, $n$ is sufficiently large. However, this strategy fails to be optimal when $n$ is smaller.

Indeed, consider the value $\left\lfloor\frac{\ell\cdot n}{k}\right\rfloor-1+RU_1(k)$, which --- according to \sref{Theorem}{maintheorem} --- is a lower bound of $RU_\ell^k(n)$. Using \sref{Lemma}{ru-estimate}, if $n<k(\log\log 2k-3)$,
\begin{align*}
\left\lfloor\frac{\ell\cdot n}{k}\right\rfloor-1+RU_1(k)&> \frac{\ell\cdot n}{k}-2+\log k+\log\log 2k\\
&>\frac{(\ell+1)n}{k}+1+\log k\ge \left\lfloor\frac{(\ell+1)n}{k}\right\rfloor-1+\left\lceil\log (k+m_{\ell+1})\right\rceil.
\end{align*}
Therefore, if $n<k(\log\log 2k-3)$ then, as seen above, $RU_\ell^k(n)>\left\lfloor\frac{(\ell+1)n}{k}\right\rfloor-1+\left\lceil\log (k+m_{\ell+1})\right\rceil$. This also means that for the described strategy --- when $n$ is relatively small --- an earlier YES leads to more questions --- unlike for larger values of $n$.

However, the following lemma states that our strategy is optimal up to the $\left(\left\lfloor\frac{n}{k}\right\rfloor-1\right)$-th question.

\begin{lemma}\label{bestquestion}
If the Bounded R\'enyi--Ulam Game --- for some $\ell\ge0$, $k,n\ge1$ --- is at a state $(x_0,\ldots,x_\ell)$ with $x_0\ge 2k$ then $(k,0,\ldots,0)$ is an optimal query for the Questioner, with the NO answer resulting the not better state.
\end{lemma}
\begin{proof}
Assume that the Questioner asks the query $(q_0,\ldots,q_\ell)$. Then the resulting state after a YES is $(q_0,q_1+x_0-q_0,\ldots,q_\ell+x_{\ell-1}-q_{\ell-1})$. Using the facts that $\sum_{i=0}^\ell q_i\le k$ and $x_0\ge 2k$, it is easy to see that $(x_0-q_0,x_1-q_1+q_0,\ldots,x_\ell-q_\ell+q_{\ell-1})$ is a not better state. Indeed, using the \zref{Convexity Lemma}{convex2} repeatedly --- shifting back every questioned element except for the $q_0$ elements questioned from the zeroth component while shifting right $q_1+\cdots+q_\ell$ elements from the zeroth component --- we see that
\begin{align*}
RU_\ell^k(x_0-q_0,x_1-q_1+q_0,\ldots,x_\ell-q_\ell+q_{\ell-1})\ge &
RU_\ell^k\left(x_0-\sum_{i=0}^\ell q_i,x_1+\sum_{i=0}^\ell q_i,x_2,\ldots,x_\ell\right),
\intertext{by shifting right $x_0-q_0-\sum_{i=0}^\ell q_i$ many elements from the zeroth component,}
\ge& RU_\ell^k(q_0,x_1+x_0-q_0,x_2,\ldots,x_\ell),
\intertext{by shifting right $x_i - q_i$ many elements from each component with index $1\le i \le \ell$,}
\ge & RU_\ell^k(q_0,q_1+x_0-q_0,\ldots,q_\ell+x_{\ell-1}-q_{\ell-1}),
\end{align*}
where we used \sref{Lemma}{trivieasier}.

Therefore it suffices to consider the case where the Responder surely answers NO. From this it follows --- using \sref{Lemma}{nyakatekertlemma} --- that $(k,0,\ldots,0)$ is an optimal question, indeed.
\end{proof}

After the $\left(\left\lfloor\frac{n}{k}\right\rfloor-1\right)$-th question it is not clear what should the Questioner do when $n$ is relatively small. It seems almost impossible to give an optimal question for an arbitrary state --- even in the case $\ell=1$. Meng, Lin, and Yang conjectured the following in \cite{meng} about the optimal query.

\begin{conj}
Let $k\ge1$, $a,b\ge0$, such that $2k>a$, $a+b\ge 2$, and $\chi(a,b)\coloneqq\min\big\{q : (q+1)a+b\le 2^q\big\}$. Using the notations
\begin{align*}
C&\coloneqq\max\big\{\chi(a,b), RU_0^k(2a+b)\big\},\\
\chi_0&\coloneqq\max\big\{x : RU_1^k(x,a-x)\le C-1, x\le a, x\le k\big\},\\
\chi_1&\coloneqq\max\big\{y : RU_1^k(\chi_0,y+a-\chi_0)\le C-1,y\le b, y\le k-\chi_0\big\},
\end{align*}
it is true that $RU_1^k(a,b)=\max\big\{C,1+RU_1^k(a-\chi_0,b-\chi_1+\chi_0)\big\}$.
\end{conj}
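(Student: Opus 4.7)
The plan is to disprove the conjecture by producing an explicit small state $(a,b)$ together with a value of $k$ where the greedy query $(\chi_0, \chi_1)$ is strictly suboptimal. The conjecture prescribes choosing $\chi_0$ and then $\chi_1$ as large as possible subject to the YES branch remaining solvable in $C-1$ further queries, which forces the whole game into $\max\{C,\, 1 + RU_1^k(a-\chi_0, b-\chi_1+\chi_0)\}$ queries. The reason this is suspect is that the maximization is one-sided: it only tightens the YES branch, while the NO state $(a-\chi_0, b-\chi_1+\chi_0)$ absorbs every queried element of the $0$-th component into its $1$-st component and may be unnecessarily difficult. I would therefore hunt for states where a strictly smaller query still keeps the YES state within the $C-1$ budget but eases the NO state enough to save one question.

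First I would implement the straightforward exhaustive recursion
\[ RU_1^k(a,b) = 1 + \min_{0 \le q_0 \le \min\{a,k\},\, 0 \le q_1 \le \min\{b,\, k-q_0\}} \max\bigl\{RU_1^k(q_0,\, q_1+a-q_0),\; RU_1^k(a-q_0,\, b-q_1+q_0)\bigr\}, \]
with base case $RU_1^k(x_0,x_1)=0$ whenever $x_0+x_1=1$, and tabulate the values for moderate $k$ (say $3 \le k \le 10$) and small $a+b$. In parallel I would compute the conjectured value $\max\{C,\, 1+RU_1^k(a-\chi_0,\, b-\chi_1+\chi_0)\}$ directly from its own recursion and flag the first discrepancy between the two tables.

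The main obstacle here is computational rather than conceptual: the state space grows rapidly and the branching factor is delicate, so keeping the search tractable will require pruning with the bounds from \sref{Theorem}{maintheorem} and the monotonicity supplied by the \zref{Convexity Lemma}{convex2}. My intuition is that the counterexample will live in the regime where $C$ is dictated by $ch(a,b)$ rather than by $RU_0^k(2a+b)$, because there $\chi_0$ is pushed all the way to its maximum and the asymmetry between the YES and NO branches is sharpest; shrinking $\chi_0$ by one typically keeps the YES state within the $C-1$ budget thanks to slack in the weight bound, while making the NO state materially easier. Once such a candidate $(a,b,k)$ is located, the disproof reduces to writing down the greedy value predicted by the conjecture alongside a strictly better first query, both verified by direct calculation.
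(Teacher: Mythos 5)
Your method is exactly the paper's: the conjecture is refuted by exhaustively computing $RU_1^k$ via the recursion you write down and comparing against the conjectured value, and your diagnosis of \emph{why} it should fail --- the greedy maximization of $(\chi_0,\chi_1)$ only constrains the YES branch and can leave the NO state $(a-\chi_0,\, b-\chi_1+\chi_0)$ needlessly hard --- is the right one. The gap is that for a disproof the counterexample itself is the entire content, and you have not produced one; moreover the search window you propose ($3\le k\le 10$, small $a+b$) would miss it. The paper's counterexample sits at $k=16$, $n=56$: after three optimal queries the game reaches the state $(a,b)=(10,44)$, for which $C=7$ and the state is solvable in seven queries, so the conjecture forces $(\chi_0,\chi_1)=(8,6)$ to be an optimal first query, yet the optimal query there is $(7,9)$. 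Note also that the realized counterexample does not quite fit your heuristic of ``shrink $\chi_0$ by one and keep the rest'': the optimal query decreases the first coordinate but \emph{increases} the second, querying more elements in total. Until the computation is pushed far enough to exhibit such a state and both values are verified, what you have is a plausible search plan rather than a proof.
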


We have found a counterexample using a recursive program\footnote{\url{https://github.com/fraknoiadam/bounded-renyi-ulam-game}} that calculates all the optimal questioning strategies for small values of $k$ and $n$. It showed that with $k=16$ and $n=56$ the Questioner reaches the state $(10,44)$ after three optimal queries. This state is solvable within seven questions, and for $a=10$, $b=44$ we have that $C=7$. Therefore the question $(\chi_0,\chi_1)$ has to be optimal, where we know that $(\chi_0,\chi_1)=(8,6)$. However, the only optimal query for this state is $(7,9)$. This example also shows that our strategy is not optimal for small values of $n$.

\section*{Acknowledgements}
We would like to thank here Gyula Katona for proposing the problem. We are also tremendously grateful to D\"om\"ot\"or P\'alv\"olgyi for helping us as a supervisor, and for organizing the REU 2020 program, which provided us with the opportunity to research.

\appendix

\section{The proof of \sref{Lemma}{ru-estimate}}\label{appendix}

\begin{proof}[Proof of \sref{Lemma}{ru-estimate}]
First, let us consider the upper bound. We estimate the denominators in the weight bound's definition as 
\begin{equation}\label{qestimate}
\binom{q}{0}+\dots+\binom{q}{\ell}\le 1+q+\dots + q^\ell \le  (q+1)^\ell,
\end{equation}
Then we take the following modified version of $W_\ell(n)$,
where we now take the minimum for non-negative real values of $q$: \[W'_\ell(n)\coloneqq\min\left\{ q:n\le\frac{2^q}{(q+1)^\ell}\right\}.\]
Note that by \eqref{qestimate} we have that $W_\ell(n)\le \lceil W_\ell'(n) \rceil\le W_\ell'(n)+1$. Our goal now is to give an upper bound for $W'_\ell(n)$, meaning we want to find a real number $q$ satisfying $n\le\tfrac{2^q}{(q+1)^\ell}$. Let us search it in the form $q=\log{n}+\log{x}$, where $x>1$. Then it is enough to find an $x$ with \[n\le \frac{nx}{(\log{n}+\log{x}+1)^\ell},\]
or, equivalently, with $(\log{n}+\log{x}+1)^\ell\le x$.

If $x=\left((\ell+1)\log n+\ell^2+1\right)^\ell$, this condition is fulfilled. 
Indeed, after substituting we get \[(\log n+\ell\log((\ell+1)\log n+\ell^2+1)+1)^\ell\le((\ell+1)\log n+\ell^2+1)^\ell,\] which can be simplified to \[\log((\ell+1)\log n+\ell^2+1)\le\log n+\ell,\] and then to \[(\ell+1)\log n+\ell^2+1\le2^\ell\cdot n,\] with the last one being clearly true for all $n\ge2$, $\ell\ge1$. Thus we have that
\begin{align*}
W_\ell(n)&\le W'_\ell(n) +1\le\log n+\ell\cdot\log((\ell+1)\log n+\ell^2+1)+1\\[4pt]
&\le \log n+\ell\log(\ell^2(\log n+1))+2\\[4pt]
&=\log n+\ell\log\log 2n+2\ell\log \ell+2.
\end{align*}
Now, let us consider the lower bound. We claim that the following estimation holds for the denominators in the definition of the weight bound when $\ell\ge2$:
\begin{equation}\label{sum-qi}
\sum_{i=0}^\ell \binom{q}{i}\ge 1+\sum_{i=0}^{\ell}\frac{\binom{\ell}{i}q^i}{\ell^\ell}=1+\frac{(q+1)^\ell}{\ell^\ell}.
\end{equation}
Indeed, compare the members of the sums for $i\ge 2$ respectively using \[\binom{q}{i}=\frac{q}{i}\cdot \frac{q-1}{i-1}\cdots \frac{q-i+1}{1}\ge \left(\frac{q}{i}\right)^i\ge \frac{q^i}{\ell^i}=\frac{\ell^{\ell-i}q^i}{\ell^\ell}\ge \frac{\binom{\ell}{\ell-i}q^i}{\ell^\ell}=\frac{\binom{\ell}{i}q^i}{\ell^\ell}.\]
Using this, it is sufficient to show that \[\binom{q}{0}+\binom{q}{1} \ge 1+\frac{\binom{\ell}{0}q^0}{\ell^\ell}+\frac{\binom{\ell}{1}q^1}{\ell^{\ell}}\] to prove \eqref{sum-qi}. This is equivalent to $q\ell(\ell^{\ell-1}-1)\ge1$, which is true for all $\ell\ge2$, since $q\ge1$.

Now, because we are searching for a lower bound for the weight bound, we look for a real number $q$ satisfying \[n\ge \frac{2^q}{\frac{(q+1)^\ell}{\ell^\ell}+1}.\] Similarly as above, we search it in the form $q=\log{n}+\log{x}$, where $x>1$. Substituting this, we get the condition $x\le \frac{(\log n +\log x +1)^\ell}{\ell^\ell}+1$. Let $x\coloneqq\left(\frac{\log 2n}{\ell}\right)^\ell+1$, then we get \[\frac{(\log n+1)^\ell}{\ell^\ell}+1\le \frac{(\log n + \log x +1)^\ell}{\ell^\ell}+1,\] which is clearly fulfilled, since $x>1$.

From this reasoning it follows that \[W_\ell(n)\ge \log{n}+\log{\left(\frac{\log 2n}{\ell}\right)^\ell}=\log n + \ell\log\log 2n-\ell\log \ell.\]
We still need to consider the case $\ell=1$. For $q=\log n+ \log\log 2n$ we have that $n\ge \frac{2^q}{q+1}$, which yields that $\log n+\log\log 2n \le W_1(n)$, indeed.
\end{proof}


\begin{thebibliography}{1}

\bibitem{berlekamp}
\textsc{E.R. Berlekamp,} Block coding for the binary symmetric channel with noiseless, delayless feedback, \textit{in: Error Correcting Codes}, Wiley, New York, 1968, 61--85.

\bibitem{du}
\textsc{D. Du \& F.K. Hwang,} \textit{Combinatorial Group Testing and Its Applications}, World Scientific, Singapore, 2000.\\\href{https://doi.org/10.1142/4252}{\tt\small https://doi.org/10.1142/4252}

\bibitem{katona1}
\textsc{G. Katona,} Combinatorial search problems, \textit{in: Survey of Combinatorial Theory}, North-Holland Publishing Company, Amsterdam, 1973, 285--308.\\\href{https://doi.org/10.1016/C2013-0-11898-3}{\tt\small https://doi.org/10.1016/C2013-0-11898-3}


\bibitem{katona2}
\textsc{G. Katona,} On separating systems of a finite set, \textit{Journal of Combinatorial Theory} \textbf{1:2}, 1966, 174--194.\\\href{https://doi.org/10.1016/S0021-9800(66)80024-8}{\tt\small https://doi.org/10.1016/S0021-9800(66)80024-8}

\bibitem{meng}
\textsc{K. Meng, C. Lin \& Y. Yang,} Minimum number of queries for an adaptive liar search game with small sets, \textit{Discrete Optimization} \textbf{10:4}, 2013, 233--240.\\\href{https://doi.org/10.1016/j.disopt.2013.07.002}{\tt\small https://doi.org/10.1016/j.disopt.2013.07.002}

\bibitem{pelc}
\textsc{A. Pelc,} Searching games with errors—fifty years of coping with liars, \textit{Theoretical Computer Science} \textbf{270:1--2}, 2002, 71--109.\\\href{https://doi.org/10.1016/S0304-3975(01)00303-6}{\tt\small https://doi.org/10.1016/S0304-3975(01)00303-6}

\bibitem{rivest}
\textsc{R.L. Rivest, A.R. Meyer, D.J. Kleitman, K. Winklmann \& J. Spencer,} Coping with errors in binary search procedures, \textit{Journal of Computer and System Sciences} \textbf{20:3}, 1980, 396--404.\\\href{https://doi.org/10.1016/0022-0000(80)90014-8}{\tt\small https://doi.org/10.1016/0022-0000(80)90014-8}

\end{thebibliography}
\end{document}